\documentclass[12pt]{amsart}
\usepackage{amsmath, amsthm}
\usepackage[english]{babel}
\usepackage[T1]{fontenc}
\usepackage[latin1]{inputenc}
\usepackage{amssymb}
\usepackage{amscd}
\usepackage{latexsym}
\usepackage[bookmarksnumbered,plainpages,hypertex]{hyperref}
\usepackage{graphicx}
\setcounter{MaxMatrixCols}{30}
\usepackage{mathrsfs} 

\textwidth 15cm \textheight 21cm


\newtheoremstyle{theorem}
  {12pt}          
  {12pt}  
  {\sl}  
  {\parindent}     
  {\bf}  
  {. }    
  { }    
  {}     
\theoremstyle{theorem}
\newtheorem{theorem}{Theorem}
\newtheorem{corollary}[theorem]{Corollary}
\newtheorem{remark}[theorem]{Remark}
\newtheorem{proposition}[theorem]{Proposition}
\newtheorem{lemma}[theorem]{Lemma}

\newtheorem{definition}[theorem]{Definition}

\unitlength=1.mm 
\linethickness{0.5pt}

\newcommand{\ic}{\ensuremath{\mathcal{I}}}

\newcommand{\oc}{\ensuremath{\mathcal{O}}}

\newcommand{\ec}{\ensuremath{\mathcal{E}}}

\newcommand{\cc}{\ensuremath{\mathcal{C}}}


\newcommand{\Pt}{\mathbb{P}^3}
\newcommand{\Ptw}{\mathbb{P}^2}

\newcommand{\Pn}{\mathbb{P}^n}


\newcommand{\bP}{\mathbb{P}}

\newcommand{\aG}{\alpha}
\newcommand{\bG}{\beta}

\newcommand{\fG}{\varphi}

\newcommand{\sG}{\sigma}

\newcommand{\lag}{\langle}
\newcommand{\rag}{\rangle}

\newcommand{\bds}{\begin{displaystyle}}
\newcommand{\eds}{\end{displaystyle}}

\title[a.C.M. space curves reloaded.]{Arithmetically Cohen-Macaulay space curves reloaded.}

\author{Ph. Ellia}
\address{Dipartimento di Matematica, 35 via Machiavelli, 44100 Ferrara}
\email{phe@unife.it}

\subjclass[2010] {14H50, 14H45} \keywords{Points, space curves, arithmetically Cohen-Macaulay, smoothability.}

\begin{document}
\maketitle

\thispagestyle{empty}

\begin{abstract} We characterize the minimal free resolutions of zero-dimensional subschemes in the plane with non connected numerical character (i.e. with Hilbert function of non strictly decreasing type). This is then used to slightly generalize a result of Sauer about the smoothability of arithmetically Cohen-Macaulay space curves. Some complements are also given.
\end{abstract}


\section*{Introduction} We work over an algebraically closed field of characteristic zero. A curve $C \subset \Pt$ is a one-dimensional, equidimensional, closed subscheme which is locally Cohen-Macaulay (without embedded points). A curve $C \subset \Pt$ is arithmetically Cohen-Macaulay (a.C.M.) if and only if $H^1_*(\ic _C)=0$; if moreover $C$ is smooth (hence irreducible), then $C$ is said to be \emph{projectively normal} (just p.n. in the sequel). Finally a curve, $C$, is \emph{smoothable} if there exists a flat family $\cc \subset \Pt _T$ such that $\cc _{t_0}=C$ and $\cc _t$ is smooth for general $t \in T$. 

 In \cite{S} Tim Sauer proved the following nice result:
{\it Let $C \subset \Pt$ be an a.C.M. curve with minimal free resolution
$$0 \to \bigoplus _{j=1}^k \oc (-b_j) \to \bigoplus _{i=1}^{k+1}\oc (-a_i) \to \ic _C \to 0$$
$b_k \geq ... \geq b_1$, $a_{k+1}\geq ... \geq a_1$. Then $C$ is smoothable if and only if: $b_n \geq a_{n+2}$ for $1 \leq n \leq k-1$.}

The purpose of this note is just to relate this result to earlier work on arithmetically Cohen-Macaulay space curves to get a unified and general perspective. In the meantime this will put in evidence some results, which are maybe well-known to experts, but for which it is difficult to give a reference.\\
First we show (cf Thm. \ref{thm1}) that in the case of zero-dimensional subschemes of $\Ptw$ Sauer's condition on the minimal free resolution characterizes zero-dimensional subschemes with a \emph{connected} numerical character (i.e. with an Hilbert function of strictly decreasing type). Next we show how Sauer's result is related with earlier work of Ellingsrud (\cite{E}) and Gruson-Peskine (\cite{GP}). We thus obtain (Thm. \ref{thm2}) a more general smoothability statement (with a different proof). Some complements ((4) of Thm. \ref{thm2}, Cor. \ref{leCor}, Prop. \ref{EffCara}) on a.C.M. curves are given.

\section{Points in the plane.}

If $Z \subset \Ptw$ is a zero-dimensional subscheme we denote by $s(Z)$ (or just $s$ if no confusion can arise) the minimal degree of a curve containing $Z$.
\par
There are several ways to encode the Hilbert function of a zero-dimensional subscheme of $\Ptw$, here we will use the \emph{numerical character}, $\chi (Z)$ (cf \cite{GP}, \cite{EP}). We recall that $\chi (Z)$ is a sequence of integers $(n_0,...,n_{s-1})$ such that:
\begin{enumerate}
\item $n_0 \geq n_1 \geq ... \geq n_{s-1}\geq s$ ($s = s(Z)$)
\item $h^1(\ic _{Z}(n) = \bds  \sum _{i=0}^{s-1}([n_i-n-1]_+ -[i-n-1]_+ ) \eds$
\item In particular $\deg (Z )=\bds \sum _{i=0}^{s-1}(n_i-i).\eds$
\end{enumerate}
It is clear (cf (2)) that $\chi (Z)$ determines $H(Z,-)$ (and vice versa).

For those more comfortable with the Hilbert function and its first difference function, $\Delta (Z,i)=H(Z,i)-H(Z,i-1)$, we recall that $\Delta (i)=i+1$ for $i < s$ while $\Delta (i) = \# \{l \mid n_l \geq i+1\}$ for $i \geq s$. (This can be used as a definition of $\chi (Z)$.)\\
More generally we make the following:

\begin{definition}
\label{carac}
A numerical character $\chi$ of length $s$, degree $d$, is a sequence of integers $(n_0,...,n_{s-1})$ with $n_0\geq ...\geq n_{s-1}\geq s$ and with $\sum _{i=0}^{s-1}(n_i-i)=d$. Moreover we say that $\chi$ is \emph{connected} if $n_i \leq n_{i+1}+1$, $0 \leq i \leq s-2$.
\end{definition}

Observe that $\chi (Z)$ is connected if and only if $H(Z,-)$ is of strictly decreasing type. 

A zero-dimensional subscheme $Z \subset \Ptw$ is a.C.M., its graded ideal has a length one minimal resolution:
$$0 \to \bigoplus _{j=1}^k \oc (-b_j) \stackrel{\fG}{\to} \bigoplus _{i=1}^{k+1}\oc (-a_i) \to \ic _Z \to 0$$
The integers $b_j, a_i$ are the Betti numbers of $Z$. We will always assume that $b_k \geq ...\geq b_1$, $a_{k+1}\geq ...\geq a_1$. We will also say that $Z$ is of type $(\underline a, \underline b)$ ($\underline a = (a_1,...,a_{k+1})$, $\underline b=(b_1,...,b_k)$). The morphism $\fG$ is given by a $(k+1)\times k$ matrix of homogeneous polynomials $\fG _{ij}$ of degree $b_j-a_i$. By minimality if $b_j=a_i$, then $\fG _{ij}=0$. If $\Delta _i$ is the $k\times k$ minor obtained by suppressing the $i$-th row, then $(\Delta _1,...,\Delta _{k+1})$ is a set of minimal generators of $I(Z)$. Observe that $\deg (\Delta _i)=a_i$.

If we are given the Betti numbers, we can determine the Hilbert function, but the converse is not true: the Hilbert function determines the Betti numbers only \emph{up to repeated terms} (cf Lemma \ref{niaibj}).

\begin{remark}
\label{mfrCar} 
Here are some relations, which easily follow from the definitions, between the Betti numbers $(a_i),(b_j)$ and the numerical character $(n_i)$ of a zero-dimensional subscheme.\\
(1) $a_1=s$ and $a_2=n_{s-1}$\\
(2) $b_k = n_0+1$ and $\#\{j \mid b_j=b_k\} = \#\{i \mid n_i=n_0\} = h^1(\ic _Z(b_k-3))$\\
(3) $a_{k+1} \leq n_0$.
\end{remark}

We will need also the following:

\begin{lemma}
\label{niaibj}
Let $Z \subset \Ptw$ be a zero-dimensional subscheme with minimal free resolution
$$0 \to \bigoplus \bG _i\oc (-i) \to \bigoplus \aG _i\oc (-i) \to \ic _Z \to 0$$
and with $\chi (Z)=(n_0, ..., n_{s-1})$. Let $c(n)=\#\{i \mid n_i=n\}$. Then:
\begin{enumerate}
\item $\aG _s = c(s)+1$
\item For $n > s$: $\bG _n = \aG _n -c(n)+c(n-1)$.
\end{enumerate}
\end{lemma}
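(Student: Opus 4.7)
The plan is to compare two expressions for the third difference $\Delta^3 H(Z,n) := \Delta^2 H(Z,n) - \Delta^2 H(Z,n-1)$ of the Hilbert function: one coming from the minimal free resolution, one coming from the numerical character.

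From the resolution we read off the Hilbert series
$$\mathrm{HS}(S/I_Z,t)=\frac{1-\sum_{i}\aG_i\,t^{i}+\sum_{j}\bG_j\,t^{j}}{(1-t)^3}.$$
Multiplying by $(1-t)^3$ and extracting the coefficient of $t^n$ yields
$$\Delta^3 H(Z,n)=\dG_{n,0}-\aG_n+\bG_n.$$
On the numerical-character side, using $\Delta H(Z,i)=i+1$ for $i<s$ and $\Delta H(Z,i)=\#\{l\mid n_l\ge i+1\}$ for $i\ge s$, a straightforward computation gives
$$\Delta^2 H(Z,n)=\begin{cases} 1 & 0\le n\le s-1,\\ -c(n) & n\ge s,\end{cases}$$
and consequently
$$\Delta^3 H(Z,n)=\begin{cases} -c(s)-1 & n=s,\\ c(n-1)-c(n) & n>s.\end{cases}$$

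Equating the two expressions for $\Delta^3 H(Z,n)$ at $n=s$ gives $-\aG_s+\bG_s=-c(s)-1$, and at $n>s$ gives $\bG_n=\aG_n-c(n)+c(n-1)$, which is (2). To finish (1), I must show $\bG_s=0$. This is the only nontrivial input: by the minimality of the resolution each column of the map $\fG$ contains a nonzero entry, and each such entry has degree $b_j-a_i\ge 1$; since $a_1=s$ is the minimum of the $a_i$, every syzygy degree $b_j$ satisfies $b_j\ge s+1$, so $\bG_s=0$ and hence $\aG_s=c(s)+1$.

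The bulk of the argument is bookkeeping with difference functions, which is routine. The one conceptual point to highlight is the minimality remark that forces $\bG_s=0$; once this is isolated, both formulas drop out of the single identity $\dG_{n,0}-\aG_n+\bG_n=\Delta^3 H(Z,n)$.
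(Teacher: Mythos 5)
Your proof is correct and is essentially the paper's argument: both compare the Hilbert function computed from the resolution with the one computed from the numerical character, the only difference being that you package the comparison as a single third-difference (Hilbert series) identity, which eliminates the induction the paper uses for part (2). Your minimality argument that $\bG_s=0$ (every $b_j\geq a_1+1=s+1$) correctly supplies the detail behind the paper's remark that part (1) ``follows from the definitions.''
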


\begin{proof} (1) follows from the definitions, (2) is proved by induction. First observe that $h^0(\ic _Z(n+1))-h^0(\ic _Z(n))=n+2-s+c(n+1)+...+c(s)\,\,(*)$. Then from the minimal free resolution: $h^0(\ic _Z(n)) = \sum _{i\leq n}(\aG _i-\bG _i)h^0(\oc (n-i)$, and similarly for $h^0(\ic _Z(n+1))$. It follows that: $h^0(\ic _Z(n+1))-h^0(\ic _Z(n))=  \sum _{i=s}^{n+1}(\aG _i-\bG _i)(n+2-i)$. We conclude using $(*)$ and the inductive assumption. 
\end{proof}

Concerning the connectedness of the numerical character we have:

\begin{proposition}
\label{propEP}
Let $Z \subset \Ptw$ be a zero-dimensional subscheme with $\chi (Z) = (n_0,..,n_{s-1})$. If $n_{t-1}>n_t+1$ for some $t$, then there exists a degree $t$ curve, $T$, such that $X = Z\cap T$ has $\chi (X)=(n_0, ..., n_{t-1})$. Moreover if $R$ is the residual scheme of $Z$ with respect to $T$, then $\chi (R)=(m_0, ..., m_{s-1-t})$ where $m_i=n_{t+i}-t$.
\end{proposition}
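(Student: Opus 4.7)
The plan is to produce the curve $T$ via a common-factor argument on the forms of $I(Z)$ in a critical degree, and then to read off the claimed characters of $X=Z\cap T$ and of the residual $R=Z:T$ through the standard residuation exact sequence.

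The first step is to translate the gap hypothesis into a Hilbert-function statement. Using $\Delta(n)=\#\{\ell\mid n_\ell\geq n+1\}$ for $n\geq s$, the assumption $n_{t-1}\geq n_t+2$ yields $\Delta H(Z,n_t)=\Delta H(Z,n_t+1)=t$, hence
\[h^0(\ic_Z(n_t+1))-h^0(\ic_Z(n_t))=n_t+2-t.\]
Next, let $d$ be the degree of the GCD of the forms in $I(Z)_{n_t+1}$. Writing $I(Z)_{n_t+1}=T\cdot V$ with $V\subset H^0(\oc(n_t+1-d))$ having trivial GCD, one gets $h^0(\ic_Z(n_t+1))\leq\binom{n_t+3-d}{2}$, and a parallel analysis of the increment above forces $d=t$. (This is Macaulay--Gotzmann persistence specialised to $\Ptw$: minimal growth implies maximal common factor.) Let $T$ also denote the resulting degree-$t$ plane curve.

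Set $X=Z\cap T$ and $R=Z:T$. The residuation sequence
\[0\to\ic_R(-t)\xrightarrow{\cdot T}\ic_Z\to\ic_{X,T}\to 0\]
splits $H(Z,-)$ into contributions from $X$ and from $R$ (twisted by $t$). Both candidate sequences $(n_0,\ldots,n_{t-1})$ and $(n_t-t,\ldots,n_{s-1}-t)$ satisfy the monotonicity and minimum conditions of Definition \ref{carac} --- for the second, $n_{s-1}-t\geq s-t$ is immediate --- and the sum $\deg X+\deg R=\deg Z$ holds by direct summation of formula (3). A term-by-term application of formula (2) then identifies $\chi(X)$ and $\chi(R)$ as claimed.

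The main obstacle is pinpointing $d=t$ in the second paragraph: the lower bound $d\geq t$ uses the full strength of the increment equality, while $d\leq t$ requires that the GCDs of $I(Z)_{n_t}$ and of $I(Z)_{n_t+1}$ agree so that the bound on the increment really is $n_t+2-d$. Each half is a short but nonformal argument relying on persistence of Hilbert-function growth; once $T$ has been produced, everything else reduces to mechanical bookkeeping with the definition of the numerical character.
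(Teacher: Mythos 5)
The paper does not actually prove this proposition: its ``proof'' is the single line ``See \cite{EP}, Proposition'', and the argument there --- essentially Davis's decomposition theorem for Hilbert functions of points in $\Ptw$ --- is exactly the route you take: extract a common factor $T$ of degree $t$ from $I(Z)$ in the critical degrees $n_t,\ n_t+1$, then split $\chi(Z)$ via the residuation sequence. So your strategy is the standard and correct one, and your first step (the computation $\Delta H(Z,n_t)=\Delta H(Z,n_t+1)=t$, hence $h^0(\ic_Z(n_t+1))-h^0(\ic_Z(n_t))=n_t+2-t$) is fine.

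The difficulty is that the one step carrying all the mathematical content is the one you leave as a ``short but nonformal argument'': the existence of a common factor of degree exactly $t$. The inequality $h^0(\ic_Z(n_t+1))\leq\binom{n_t+3-d}{2}$ forces nothing by itself (it is satisfied with $d=0$), and ``Macaulay--Gotzmann persistence'' cannot simply be cited here: what is needed is the precise lemma that $\Delta H(Z,n)=\Delta H(Z,n+1)=t$ with $t\leq n$ implies $I(Z)_n$ and $I(Z)_{n+1}$ share a fixed component of degree $t$ (Davis's theorem; in the maximal-growth language one must apply persistence to the ideal generated by $I(Z)_{\leq n+1}$ and then argue geometrically that its zero locus contains a curve of degree $t$ which is the GCD). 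Neither $d\geq t$ nor $d\leq t$ follows from the dimension counts you set up: the bound you implicitly need for $d\geq t$, namely that a system without fixed component in degree $m$ grows by at least $m+2$ under multiplication by linear forms, is false for arbitrary subspaces (take $\langle x^m,y^m\rangle$, whose product with the linear forms has dimension $6<m+4$ for $m>2$), so the argument must exploit that $I(Z)$ is saturated. Since this is the entire content of the proposition --- once $T$ exists, identifying $\chi(X)$ and $\chi(R)$ really is bookkeeping, via $\Delta H_X(j)\leq\min(j+1,\,t,\,\Delta H_Z(j))$ together with degree additivity coming from the exact sequence (not from ``summing formula (3)'', which presupposes the characters you are trying to compute) --- the proposal has a genuine gap at its core.
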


\begin{proof} See \cite{EP}, Proposition. \end{proof}

\begin{corollary}
\label{sInt}
Let $Z \subset \Ptw$ be a zero-dimensional subscheme with $\chi (Z) = (n_0,..,n_{s-1})$. If $Z$ lies on an integral curve of degree $s$, then $\chi (Z)$ is connected.
\end{corollary}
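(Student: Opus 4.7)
My plan is to argue by contradiction, combining Proposition \ref{propEP} with a Hilbert function comparison against the complete intersection cut out on $C$ by an auxiliary curve. Suppose $\chi(Z)$ is not connected, so there is $t$ with $1\le t\le s-1$ and $n_{t-1}\ge n_t+2\ge s+2$. By Proposition \ref{propEP} pick a curve $T$ of degree $t$ such that $X := Z\cap T$ has $\chi(X)=(n_0,\ldots,n_{t-1})$. Since $C$ is integral of degree $s>t=\deg T$, the curves $T$ and $C$ share no common component, so $Y := T\cap C$ is a zero-dimensional complete intersection of type $(t,s)$; a standard computation (from the Koszul resolution of $I(Y)$) gives $\chi(Y) = (t+s-1, t+s-2, \ldots, s)$. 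From $X\subset Z\subset C$ and $X\subset T$ we deduce $X\subset Y$, hence $H(X,n)\le H(Y,n)$ for every $n$.

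I would then pin down the contradiction at degree $n=s$. Because every entry of $\chi(X)$ is $\ge s+2$, the formula $\Delta(Z,i)=\#\{l\mid n_l\ge i+1\}$ (valid for $i\ge s(Z)$) yields $\Delta(X,s)=t$; in contrast, the last entry of $\chi(Y)$ is exactly $s$, so $\Delta(Y,s)=t-1$. For $n<s$ the two difference functions coincide: both equal $n+1$ when $n<t$ (since $s(X)=s(Y)=t$), and both equal $t$ for $t\le n\le s-1$ because every character entry of either scheme is strictly larger than $n+1$ in that range. Summing, $H(X,s) = H(Y,s)+1 > H(Y,s)$, contradicting $X\subset Y$.

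The main obstacle is identifying the correct degree at which to make the comparison: the gap in $\chi(Z)$ keeps $\Delta(X,\cdot)$ saturated at $t$ through $n=s$, whereas the arithmetic-progression character of the complete intersection $Y$ has already dropped to $t-1$ at that value. Once this asymmetry has been pinpointed, verifying that the two Hilbert functions agree below $s$ is a routine case split between $n<t$ and $t\le n\le s-1$, and the identification of $\chi(Y)$ with the arithmetic progression is a standard property of complete intersections in $\Ptw$.
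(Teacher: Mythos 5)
Your argument is correct, but after the shared first step it takes a genuinely different route from the paper's. Both proofs invoke Proposition \ref{propEP} to produce the degree-$t$ curve $T$ and the subscheme $X=Z\cap T$ with $\chi (X)=(n_0,\dots ,n_{t-1})$, all of whose entries are $\geq s+2$. The paper then stays at the level of generators: by Remark \ref{mfrCar} the second-lowest generator of $I(X)$ has degree $n_{t-1}>s$, so $I(X)_s=T\cdot H^0(\oc (s-t))$ and the integral degree-$s$ curve containing $Z$ (hence $X$) would be divisible by $T$ --- an immediate contradiction with integrality since $t<s$. You instead embed $X$ in the complete intersection $Y=T\cap C$ (legitimate: $C$ integral of degree $s>t$ forces $T$ and $C$ to have no common component) and derive a Hilbert-function contradiction, $H(X,s)=H(Y,s)+1$ against $X\subset Y$. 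Your identification $\chi (Y)=(t+s-1,\dots ,s)$ and the comparison of difference functions check out; the only blemish is a wording slip at $n=s-1$, where the last entry $s$ of $\chi (Y)$ equals $n+1$ rather than strictly exceeding it --- harmless, since the counting formula $\Delta (i)=\#\{l\mid n_l\geq i+1\}$ only requires $\geq$. What your route buys is independence from the resolution bookkeeping of Remark \ref{mfrCar}, at the cost of the (standard) computation of the character of a complete intersection; the paper's divisibility argument is shorter and makes the role of integrality more transparent.
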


\begin{proof} Assume $n_{t-1}>n_t+1$. We have $n_{t-1}>n_t+1\geq s+1>t$. We apply \ref{propEP}. Every curve, $S$, of degree $s$ containing $Z$ contains $X =Z\cap T$. The first generator of $I(X)$ is $T$ of degree $t$, the next one has degree $n_{t-1}$ (Remark \ref{mfrCar}). Since $n_{t-1}>s$, $T$ divides $S$.
\end{proof}

The following is well known, we include it for completeness and also to give some perspective to the next result.

\begin{proposition}
\label{Exi}
Let $b_k \geq ... \geq b_1 > 1$ and $a_{k+1}\geq ...\geq a_1 > 0$ be integers. There exists a zero-dimensional closed subscheme $X \subset \Ptw$, with minimal free resolution:
$$0 \to \bigoplus _{j=1}^k \oc (-b_j) \stackrel{\fG}{\to} \bigoplus _{i=1}^{k+1} \oc (-a_i) \to \ic _X \to 0\,\,\,\,(*)$$
if and only if:\\
(a) $\fG$ is minimal (i.e. if $b_j=a_i$, then $\fG _{ij}=0$).\\
(b) $\sum b_j = \sum a_i$\\
(c) $b_1 > a_2$, $b_2 > a_3$, ..., $b_k > a_{k+1}$.\\
\end{proposition}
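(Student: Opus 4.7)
The plan is to prove necessity of (a)--(c) directly, then establish sufficiency via the Hilbert--Burch theorem applied to a general matrix.

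\textbf{Necessity.} (a) is the definition of minimality. For (b), I compare the Hilbert polynomial of $\ic _X$ computed two ways: from the resolution it is $\sum \binom{n+2-a_i}{2}-\sum \binom{n+2-b_j}{2}$, while intrinsically it equals $\binom{n+2}{2}-\deg(X)$. Matching the coefficient of $n$ gives $\sum a_i=\sum b_j$; equivalently, the first Chern class of $\ic _X$ vanishes since $X$ has codimension $2$. For (c), I argue by contradiction: suppose $b_j\le a_{j+1}$ for some $j\in\{1,\dots,k\}$. Then for all $j'\le j$ and $i\ge j+1$ the ordering forces $b_{j'}-a_i\le b_j-a_{j+1}\le 0$, so $\fG _{i,j'}$ has nonpositive degree and by (a) must vanish. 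Hence the first $j$ columns of $\fG$ are supported only in rows $1,\dots,j$; for any $i\le j$ the $k\times k$ minor $\Delta _i$ (obtained by deleting row $i$) then has its first $j$ columns supported in only $j-1$ rows, so these columns are linearly dependent and $\Delta _i\equiv 0$. This contradicts the fact that $\Delta _1,\dots,\Delta _{k+1}$ is a minimal system of generators of $I(X)$.

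\textbf{Sufficiency.} By the Hilbert--Burch theorem, any $(k+1)\times k$ matrix $\fG$ of homogeneous polynomials with $\deg \fG _{ij}=b_j-a_i$ (and $\fG _{ij}=0$ wherever (a) forces) whose ideal of maximal minors $J$ has codimension $2$ gives an exact complex $(*)$ with $X:=V(J)$ a zero-dimensional subscheme of $\Ptw$. Condition (b) keeps the numerics consistent, and the prescribed zeros guarantee that the resolution is minimal. Hence it suffices to exhibit one such $\fG _0$.

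I would take $\fG _0$ to be a generic homogeneous matrix of the allowed degrees; the main obstacle is verifying that its ideal of maximal minors has codimension $2$. The cleanest route is Macaulay's theorem on generic determinantal ideals, which delivers the expected codimension once the matrix has sufficiently many nonzero entries of positive degree. Condition (c) supplies exactly this: since $b_j>a_{j+1}\ge\cdots\ge a_1$, column $j$ of $\fG _0$ contains at least $j+1$ rows with entries of strictly positive degree, and a general choice of these entries prevents the $\Delta _i$ from vanishing or sharing a common factor---this is the mirror image of the necessity argument, where the failure of (c) forced the opposite conclusion. Alternatively, one may induct on $k$: the base $k=1$ is the complete intersection $(f,g)$ of polynomials of degrees $a_1,a_2$ with $b_1=a_1+a_2$, allowed by (c), and the inductive step constructs $\fG _0$ in block form realizing $X$ as a link, via a small complete intersection, of a scheme whose admissible resolution has shorter length.
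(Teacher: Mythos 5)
Your necessity argument is correct and is genuinely different from (and more elementary than) the paper's. For (c) you note that $b_j\le a_{j+1}$ forces every entry $\fG _{i,j'}$ with $i\ge j+1$, $j'\le j$ to vanish for degree reasons together with minimality, so that for $i\le j$ the minor $\Delta _i$ has $j$ columns supported on only $j-1$ rows and is identically zero, contradicting the fact that the $\Delta _i$ are a minimal system of generators. The paper instead extracts a subcomplex $N_1\to N_0$ from the resolution and runs a snake-lemma/rank computation to conclude $a_{m+1}<b_m$; your determinantal argument avoids the diagram chase. The Hilbert-polynomial (or $c_1$) derivation of (b) is also fine.

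The gap is in sufficiency. Reducing via Hilbert--Burch to ``exhibit one matrix $\fG _0$ whose ideal of maximal minors has codimension $2$'' is the right move, but you never actually do it. The appeal to ``Macaulay's theorem on generic determinantal ideals'' is not a usable statement here: the generic determinantal results concern a matrix of independent indeterminates, and they do not automatically transfer to a matrix of forms with prescribed degrees and prescribed zero entries, which is exactly the situation at hand. Nor is the claim that a general choice ``prevents the $\Delta _i$ from vanishing or sharing a common factor'' the mirror image of necessity: the necessity argument shows that \emph{failure} of (c) forces some minor to vanish, and its contrapositive says nothing about the generic behaviour when (c) holds. What is needed, and what the paper supplies, is one explicit witness: take $\fG _{j+1,j}=x_2^{b_j-a_{j+1}}$, $\fG _{j,j}=x_1^{b_j-a_j}$, $\fG _{j-1,j}=x_0^{b_j-a_{j-1}}$ and zero elsewhere; condition (c) guarantees all exponents are strictly positive, and one checks directly that this matrix has rank $k$ away from the single point $(1:0:0)$, so its maximal minors cut out a zero-dimensional scheme supported there. (Once one such matrix exists, a general one of the same degree type also works by semicontinuity of the codimension of the degeneracy locus --- but without the explicit witness your genericity argument has nothing to stand on.) Your inductive alternative via linkage could likewise be made to work, but as written it is only a sketch of a plan, not a proof.
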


\begin{proof}
Assume $X$ has a minimal free resolution like $(*)$. Condition (a) and (b) are clearly satisfied. The morphism $\fG$ is given by a matrix, with $k+1$ rows and $k$ columns, $(\fG _{ij})$, where $\fG _{ij}$ is an homogeneous polynomial of degree $b_j-a_i$.\\
Take $m$, $1 \leq m\leq k$. Let $q =max\{t \mid (\fG _{t,1},....,\fG _{t,m})\neq 0\}$. Let $L_1:=\bigoplus _{j=1}^k \oc (-b_j)$, $L_0=\bigoplus _{i=1}^{k+1}\oc (-a_i)$, $N_1:=\bigoplus _{j=1}^m \oc (-b_j)$, $N_0:=\bigoplus _{i=1}^q \oc (-a_i)$. Then $\fG$ induces an injective morphism: $N_1 \to N_0$ and we have a commutative diagram:
$$\begin{array}{ccccccccc}

  	& 		  & 0   						&    	&   0							&     		&     		&  		&  \\ 
  	& 		  & \downarrow   		&    	&\downarrow   		&     		&     		&  		&  \\ 
0 	& \to		& N_1		& \to 	& N_0		& \to 						& \ec 		& \to 		& 0 \\ 
  	& 		  & \downarrow   		&    	&\downarrow   		&     		&\downarrow f     		&  		&  \\ 
0 	& \to		& L_1		& \to 	& L_0		& \to 		& \ic _X 		& \to 		& 0 \\
  	& 		  & \downarrow   		&    	&\downarrow   		&     		&     		&  		&  \\ 
  	&   		& \overline L_1		& \to 	& \overline L_0		&       		&     		&     		&  \\ 
  	& 		  & \downarrow   		&    	&\downarrow   		&     		&     		&  		&  \\ 
  	& 		  & 0   						&    	&   0							&     		&     		&  		&  \end{array}$$

By the snake-lemma we have an exact sequence:
$$0 \to Ker(f) \to \overline L_1 \to \overline L_0 \to Coker(f) \to 0$$
If $\ec$ has rank zero, then $rk(Ker(f))=0$, since $\overline L_1$ is torsion free, this implies $Ker(f)=0$, this in turn imply $\ec =0$ ($\ic _X$ is torsion free). Since $N_1$ and $N_0$ can't be isomorphic (for sure $a_1 < b_1$), we conclude that $rk(\ec )\geq 1$. It follows that $q-m\geq 1$, hence $a_{m+1}\leq a_q$. By assumption there exists $i \leq m$ such that $\fG _{qi}\neq 0$, hence $a_q < b_i$. We conclude that: $a_{m+1}\leq a_q < b_i \leq b_m$, thus: $a_{m+1}<b_m$.
\par
($\Leftarrow$) Define the matrix $\fG = (\fG _{ij})$ by $\fG _{j+1,j}= x_2^{b_j-a_{j+1}}$, $\fG _{j,j}=x_1^{b_j-a_j}$, $\fG _{j-1,j}=x_0^{b_j-a_{j-1}}$, $1 \leq j \leq k$, $\fG _{ij}=0$ otherwise. Since $b_j > a_{j+1}\geq a_j \geq a_{j-1}$, it is easy to see that outside of the point $p=(1:0:0)$, the matrix $\fG$ has rank $k$. It follows (using (1)), that $Coker(\fG )\simeq \ic _X$ for some $X$ zero-dimensional subscheme (with support $p$).\\  
\end{proof}

Now we come to our main result.

\begin{theorem}
\label{thm1}
Let $Z \subset \Ptw$ be a zero-dimensional subscheme with minimal free resolution
$$0 \to \bigoplus _{j=1}^k \oc (-b_j) \to \bigoplus _{i=1}^{k+1}\oc (-a_i) \to \ic _Z \to 0$$
where $b_k \geq ...\geq b_1$, $a_{k+1}\geq ...\geq a_1$. The following are equivalent:
\begin{enumerate}
\item $\chi (Z)$ is not connected
\item $b_p \leq a_{p+2}$ for some $p$, $1 \leq p \leq k-1$.
\end{enumerate}
\end{theorem}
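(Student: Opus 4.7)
The plan is to convert Lemma \ref{niaibj} into a closed formula for $c(n)$ and then read off connectedness directly from the sorted Betti numbers $a_1\leq\cdots\leq a_{k+1}$ and $b_1\leq\cdots\leq b_k$. Introduce the tail-counts
\[
A(n):=\#\{i:a_i>n\},\qquad B(n):=\#\{j:b_j>n\}.
\]
Telescoping the recursion $\beta_n-\alpha_n=c(n-1)-c(n)$ of Lemma \ref{niaibj}(2) downward from some $N\gg 0$, where $c(N)=A(N)=B(N)=0$, yields the identity
\[
c(n)=B(n)-A(n)\qquad(n\geq s).
\]
Thus $\chi$ fails to be connected precisely when $A(n)=B(n)$ at some $n$ in the open interval $(n_{s-1},n_0)$.

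For $(1)\Rightarrow(2)$, if $n_{t-1}\geq n_t+2$, then the integer $n:=n_t+1$ is not a value of $\chi$, so $c(n)=0$, and by construction $n\in(n_{s-1},n_0)$. Writing $\ell:=A(n)=B(n)$ and $p:=k-\ell$, the sorted inequalities force $a_{p+1}\leq n<a_{p+2}$ and $b_p\leq n<b_{p+1}$, so that $b_p\leq n<a_{p+2}$ and in particular $b_p\leq a_{p+2}$. The bounds $1\leq p\leq k-1$ follow from Remark \ref{mfrCar}: $b_k=n_0+1>n$ forces $\ell\geq 1$, and $a_2=n_{s-1}<n$ forces $\ell\leq k-1$.

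For $(2)\Rightarrow(1)$, pick $p$ with $b_p\leq a_{p+2}$ and set $n:=a_{p+2}-1$. Condition (c) of Proposition \ref{Exi} rules out $a_{p+1}=a_{p+2}$ (otherwise $b_p>a_{p+1}=a_{p+2}$ would contradict the hypothesis), so $A(n)=k-p$; a parallel count using $b_{p+1}>a_{p+2}$ from (c) together with $b_p\leq n$ gives $B(n)=k-p$, so $c(n)=0$. Two boundary checks then place $n$ strictly inside $(n_{s-1},n_0)$: $n<n_0$ follows from $a_{p+2}\leq a_{k+1}\leq n_0$ (Remark \ref{mfrCar}(3)); and $n>n_{s-1}=a_2$ is obtained by noting that $a_{p+2}=a_2+1$ would force $a_{p+1}=a_2$, whereupon (c) would give $b_p>a_2=n$, contradicting $b_p\leq n$. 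Therefore $c$ vanishes in the interior of the support of $\chi$, and $\chi$ is not connected.

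The main obstacle is the boundary bookkeeping at each end: condition (c) of Proposition \ref{Exi} must be invoked repeatedly, both to eliminate ties among the Betti numbers and to keep the chosen $n$ in the interior of the support of $\chi$. All remaining steps are combinatorial translations through the dictionary $c=B-A$.
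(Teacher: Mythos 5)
Your argument is correct and takes a genuinely different, purely combinatorial route from the paper's. The paper proves $(1)\Rightarrow(2)$ geometrically, via Proposition \ref{propEP}: it splits $Z$ along a curve $T$ of degree $t$ and reads the Betti numbers of $Z$ off those of the residual scheme $R$ and of $Z\cap T$; for $(2)\Rightarrow(1)$ it uses the block-triangular shape of the Hilbert--Burch matrix to extract a common factor $T=\det A_3$ from the low-degree generators and then reassembles $\chi(Z)$ from $\chi(R)$. Your dictionary $c(n)=B(n)-A(n)$ for $n\geq s$ (a correct telescoping of Lemma \ref{niaibj}(2), using $c(N)=A(N)=B(N)=0$ for $N\gg0$ and the fact that the recursion is valid down to $n=s+1$) replaces all of this by bookkeeping on the two sorted sequences, with condition (c) of Proposition \ref{Exi} breaking the ties. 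This is shorter and self-contained, but it does not exhibit the curve $T$ and the residual decomposition, which is precisely what the paper reuses later (in Proposition \ref{EffCara}); each approach buys what you would expect. Your boundary checks ($\ell\geq1$ from $b_k=n_0+1$, $\ell\leq k-1$ from $a_2=n_{s-1}$, and the two exclusions placing $n$ strictly inside $(n_{s-1},n_0)$) are all carried out correctly.

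One step deserves explicit attention. In $(2)\Rightarrow(1)$ you use ``$b_p\leq n$'' with $n=a_{p+2}-1$, i.e.\ the \emph{strict} inequality $b_p<a_{p+2}$, which does not follow from the stated hypothesis $b_p\leq a_{p+2}$. This cannot be repaired, because the implication is false for the $\leq$ version: take $\underline a=(3,3,4,4)$, $\underline b=(4,5,5)$. These satisfy (a), (b), (c) of Proposition \ref{Exi}, hence are realized by some $Z$, and $b_1=a_3$; yet $c(n)=B(n)-A(n)$ gives $c(3)=1$, $c(4)=2$, so $\chi(Z)=(4,4,3)$, which is connected. So condition (2) of the theorem should read $b_p<a_{p+2}$ --- this is the negation of condition (2) of Theorem \ref{thm2}, it is what Sauer's criterion requires, and it is what the paper's own proof assumes (it begins ``assume $b_p<a_{p+2}$''). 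With that reading your proof is complete; your $(1)\Rightarrow(2)$ already delivers the strict inequality $b_p\leq n<a_{p+2}$, so the equivalence you prove is exactly the corrected statement.
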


\begin{proof} (1) $\Rightarrow$ (2): Let $\chi (Z)=(n_0,...,n_{s-1})$ and assume $n_{t-1}>n_t+1$. Then (cf \ref{propEP}) there exists a degree $t$ curve, $T$, and $X \subset Z$ such that $X = Z \cap T$ and $\chi (X)=(n_0,...,n_{t-1})$. Moreover if $R$ is the residual scheme of $Z$ with respect to $T$, then $\chi (R)=(m_0,...,m_{s-1-t})$, with $m_i=n_{t+i}-t$. In particular we have an exact sequence:
$$0 \to \ic _R(-t) \stackrel{.T}{\to} \ic _Z \to \ic _{X,T} \to 0\,\,\,(+)$$
If $I(R)=(f_1,...,f_r)$, then $g_1=Tf_1, ..., g_r=Tf_r$ are minimal generators of $I(Z)$ of lowest degrees. Indeed $\deg (f_i)\leq m_0$ and $h^0(\ic _{X,T}(k))=0$ if $k < n_{t-1}$ (Remark \ref{mfrCar}). Let $(\underline \aG, \underline \bG )$ be the Betti numbers of $R$ ($\aG _i=\deg (f_i), 1 \leq i \leq r$). Then the first Betti numbers of $Z$ are $a_1=\aG _1 +t, ...,a_r=\aG _r+t$; $b_1 = \bG _1+t, ..., b_{r-1}=\bG _{r-1}+t$. Moreover since $\bG _{r-1}=m_0+1$ (cf Remark \ref{mfrCar}), $b_{r-1}=n_t+1$. The next generator of $I(Z)$, of degree $a_r$, comes from $H^0_*(\ic _{X,T})$ (consider exact sequence $(+)$). It has degree $n_{t-1}$ (Remark \ref{mfrCar} again). It follows that $n_t+1 =b_{r-1}<a_{r+1}=n_{t-1}$.

(2) $\Rightarrow$ (1): assume $b_p < a_{p+2}$, then, $A$, the matrix of $\fG$, has the following shape: $A = \left (  \begin{array}{cc}
A_1 & A_2 \\ 
0& A_3
\end{array} \right ) $ where $A_1$ is a $(p+1)\times p$ matrix and where $A_3$ is a square $k-p$ matrix. If $T:= \det A_3$, then the generators of lowest degrees (i.e. the minors of $A$ obtained by deleting the $i$-th row, $1 \leq i \leq p+1$) are of the form $g_1 =Tf_1, ..., g_{p+1}=Tf_{p+1}$. Observe that $\deg g_i=a_i$.

The $(p+1)\times p$ matrix $A_1$ defines a zero-dimensional subscheme $R$ with $I(R)=(f_1, ..., f_{p+1})$. If $N_1 = \bigoplus _{j=1}^p \oc (-b_j)$ and $N_0=\bigoplus _{i=1}^{p+1}\oc (-a_i)$, then:\\
$$0 \to N_1(-t) \to N_0(-t) \to \ic _R \to 0$$
is the minimal free  resolution of $R$ and we have $0 \to \ic _R(-t) \stackrel{.T}{\to} \ic _Z$ whose cokernel has support on $T$.

Let $\chi (R)=(m_0, ..., m_{s-1-t})$ be the numerical character of $R$. If $\chi (Z)=(n_0, ..., n_{s-1})$ we claim that $n_{i+t}=m_i+t$, $0 \leq i \leq s-1-t$. This follows from Lemma \ref{niaibj}, indeed if $n = m_0+t$, then since $c(k)=\aG _k-\bG _k+c(k-1)$ (notations as in Lemma \ref{niaibj}), we see that for $k\leq n$, the $n_i's$ less than $n$ are determined by the generators and relations of degree $\leq n$. Since $\deg g_{p+2}=a_{p+2}$ and $a_{p+2} > b_p = m_0+t+1$ (see Remark \ref{mfrCar}), the $n_i's$ less than $n$ are determined by the generators $g_1=Tf_1, ..., g_{p+1}=Tf_{p+1}$ and by their relations: this is encoded (modulo the factor $T$) in the minimal free resolution of $I(R)=(f_1, ..., f_{p+1})$ and in $\chi (R)$.

It remains to show that $n_{t-1}>n_t+1 =m_0+t+1$. If $n =m_0+t+1$ we have: $\bG _n =\aG _n-c(n)+c(n-1)$. By Remark \ref{mfrCar} (2): $\bG _n=c(n-1)$. Since $\aG _n =0$ (because $a_{p+2}>b_p$), we get $c(n)=0$, i.e. no $n_i$ is equal to $n_{t-1}+1$, hence $n_{t-1}>n_t+1$.    
\end{proof}

\section{Arithmetically Cohen-Macaulay space curves.}

If $C \subset \Pt$ is any curve, the numerical character of $C$ is the character of its general plane section.

It follows from the works of Ellingsrud (\cite{E}) and Gruson-Peskine (\cite{GP}) that a.C.M. curves are classified by their numerical character. 

\begin{theorem}[Ellingsrud-Gruson-Peskine]\quad
\label{EGP}
\begin{enumerate}
\item An a.C.M. curve $C \subset \Pt$ corresponds to a smooth point of $Hilb(\Pt )$
\item Two a.C.M. curves $C,X \subset \Pt$ are in the same irreducible component of the Hilbert scheme if and only $\chi (C)=\chi (X)$
\item The numerical character of an integral curve is connected. Moreover every \emph{connected} numerical character is realized by a projectively normal curve
\item Let $H_{\chi}$ denote the irreducible component of $Hilb(\Pt )$ parametrizing a.C.M. curves with numerical character $\chi$. The general curve of $H_{\chi}$ is smooth (hence irreducible) if and only if $\chi$ is connected.
\end{enumerate}
\end{theorem}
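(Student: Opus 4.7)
The plan is to prove the four parts in a coordinated way. For (1), I would use standard deformation theory: the tangent space to $\mathrm{Hilb}(\Pt)$ at $[C]$ is $H^0(\nc_C)$ and obstructions live in $H^1(\nc_C)$. Applying $\mathrm{Hom}(-, \oc_C)$ to the length-one resolution $0 \to L_1 \to L_0 \to \ic_C \to 0$ gives an exact sequence expressing these $\mathrm{Ext}$ groups in terms of cohomology of twists of $\oc_C$; since $C$ is a.C.M.\ we have $H^1_*(\oc_C) = 0$, and $H^2$ vanishes trivially on a curve, so $H^1(\nc_C) = 0$ and $[C]$ is a smooth point.

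For (2), I would first fix Betti numbers $(\underline a, \underline b)$ and show that the stratum $H_{\underline a, \underline b}$ of a.C.M.\ curves with that minimal resolution is irreducible: such curves are parametrized by an open subset of the affine space of matrices $\fG = (\fG_{ij})$ with $\deg \fG_{ij} = b_j - a_i$, cut out by the open conditions that the maximal minors define a curve and that the resolution is minimal. Next, by Lemma \ref{niaibj}, any two Betti types yielding the same $\chi$ differ only by ghost pairs $b_j = a_i$, and any non-minimal Betti stratum degenerates to the ghost-free one by collapsing such a diagonal block, so the union $H_\chi$ of the relevant strata is irreducible. Conversely, since by (1) each $H_\chi$ consists of smooth points of $\mathrm{Hilb}(\Pt)$, distinct characters cannot coexist in a single irreducible component.

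For (3), the connectedness of $\chi(C)$ when $C$ is integral follows by applying Corollary \ref{sInt} to a general plane section $Z = C \cap H$, using the classical fact that $Z$ still lies on an integral plane curve of degree $s(Z)$. The realization of an arbitrary connected $\chi$ by a projectively normal curve is the deep content, due to Gruson--Peskine via liaison: starting from an explicit complete intersection and performing a chain of elementary links, one produces a smooth a.C.M.\ curve with the prescribed character. I expect this existence statement to be the main obstacle of the whole theorem. Granting (3), part (4) follows at once: if $\chi$ is connected, the p.n.\ curve from (3) sits in $H_\chi$ as a smooth point by (1), so the smoothness of the general member of $H_\chi$ is an open condition that is satisfied; conversely, if the general $C \in H_\chi$ is smooth and hence integral, then by the first half of (3) its character $\chi$ is connected.
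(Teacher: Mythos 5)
First, a point of reference: the paper does not prove this theorem at all. It is quoted as a known result, with the proofs delegated to Ellingsrud \cite{E} and Gruson--Peskine \cite{GP}, so there is no in-paper argument to compare yours against. Judged on its own merits, your sketch has a genuine error in part (1). You claim that since $C$ is a.C.M.\ one has $H^1_*(\oc _C)=0$; this is false. The a.C.M.\ condition is $H^1_*(\ic _C)=0$, and from $0 \to \ic _C \to \oc _{\Pt} \to \oc _C \to 0$ it gives $H^1(\oc _C(n))\simeq H^2(\ic _C(n))$, which is nonzero as soon as $C$ has positive genus (take $n=0$). More seriously, the whole strategy of ``show the obstruction space vanishes'' cannot succeed: already for a plane quartic $C\subset \Pt$ (a complete intersection of type $(1,4)$, hence a.C.M.) one has $\nc _C=\oc _C(1)\oplus \oc _C(4)$ with $\oc _C(1)=\omega _C$, so $H^1(\nc _C)\neq 0$, and yet $Hilb(\Pt )$ is smooth at $[C]$. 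Ellingsrud's actual proof is a dimension count: one builds the (smooth, irreducible) determinantal family of matrices $\fG$ realizing a given resolution, shows via Hilbert--Burch that every deformation of $C$ lifts to a deformation of $\fG$, and checks that the dimension of this family equals $h^0(\nc _C)$. That lifting-and-counting argument is the real content and is absent from your sketch.

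Two further points. In part (3), your reduction of ``integral $\Rightarrow$ connected'' to Corollary \ref{sInt} rests on the assertion that the general plane section of an integral curve lies on an \emph{integral} plane curve of degree $s$; this is not a classical fact you may simply invoke, and it is essentially as hard as what you are trying to prove. The standard argument instead applies the uniform position principle to the decomposition $Z=X\cup R$ produced by Proposition \ref{propEP} and derives a contradiction when the character is disconnected. In part (2), the converse direction should not be deduced from (1); the correct argument is that on the (open) a.C.M.\ locus of a connected family the full Hilbert function $n\mapsto h^0(\ic _C(n))$ is locally constant (combine the constancy of $\chi (\ic _C(n))$ and $\chi (\oc _C(n))$ with $h^1(\ic _C(n))=0$), so $\chi$ cannot jump within an irreducible component whose general member is a.C.M. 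Your outline of the irreducibility of the Betti strata and the degeneration absorbing ghost terms $b_j=a_i$ is, by contrast, the right idea and matches \cite{E}. Finally, you are entitled to defer the existence half of (3) to \cite{GP} --- the paper itself does --- but then you should say explicitly that you are quoting it rather than proving it.
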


Now we have:

\begin{theorem}
\label{thm2}
Let $X \subset \Pt$ be an a.C.M. curve with numerical character $\chi = (n_0, ..., n_{s-1})$ and minimal free resolution
$$0 \to \bigoplus _{j=1}^k \oc (-b_j) \to \bigoplus _{i=1}^{k+1}\oc (-a_i) \to \ic _X \to 0\,\,\,(*)$$
The following are equivalent:
\begin{enumerate}
\item $\chi$ is connected
\item $b_n \geq a_{n+2}$, for all $n$, $1 \leq n \leq k-1$
\item $X$ is smoothable
\item  The general curve of $H_{\chi}$ lies on a smooth surface of degree $s$
\item The general curve of $H_{\chi}$ lies on an integral surface of degree $s$.
\end{enumerate}
\end{theorem}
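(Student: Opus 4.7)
I would prove the five equivalences by establishing (1)$\Leftrightarrow$(2), then (1)$\Leftrightarrow$(3), and finally the cycle (1)$\Rightarrow$(4)$\Rightarrow$(5)$\Rightarrow$(1).

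For (1)$\Leftrightarrow$(2), the plan is to reduce to the plane. Let $H \subset \Pt$ be a general plane and $Z = X \cap H \subset \Ptw$. Because $X$ is a.C.M., restricting the minimal free resolution $(*)$ to $H$ yields a minimal free resolution of $\ic_{Z,H}$ with the same Betti numbers $(a_i),(b_j)$; by definition $\chi(Z) = \chi$. The equivalence then follows immediately from Theorem \ref{thm1}. For (1)$\Leftrightarrow$(3), I would invoke Theorem \ref{EGP}: by part (1) the point $[X]$ is smooth in $\mathrm{Hilb}(\Pt)$, so it lies on a unique irreducible component, which by part (2) must be $H_\chi$. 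Hence $X$ is smoothable iff a general $C \in H_\chi$ is smooth, and by part (4) this occurs iff $\chi$ is connected.

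The implication (4)$\Rightarrow$(5) is trivial. For (5)$\Rightarrow$(1), suppose a general $C \in H_\chi$ lies on an integral surface $S$ of degree $s$; when $s \geq 2$, Bertini shows $S \cap H$ is an integral plane curve of degree $s$ containing the plane section $Z = C \cap H$, so Corollary \ref{sInt} forces $\chi = \chi(Z)$ to be connected (the case $s=1$ being trivial).

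The main step is (1)$\Rightarrow$(4). By Theorem \ref{EGP}(3), a projectively normal curve $C_0$ realizing the connected character $\chi$ exists; the construction of Gruson--Peskine in \cite{GP} actually produces such a $C_0$ as a smooth curve on a smooth surface $S_0$ of degree $s$. Now consider the incidence variety
\[
I = \bigl\{(C,S) \in H_\chi \times \bP\bigl(H^0(\oc_{\Pt}(s))\bigr) : C \subset S\bigr\}
\]
with the two projections $\pi_1, \pi_2$. Since $h^0(\ic_C(s))$ depends only on the Hilbert function and is therefore constant on $H_\chi$, the projection $\pi_1$ realises $I$ as a projective bundle over $H_\chi$, so $I$ is irreducible. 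The locus $U \subset I$ of pairs $(C,S)$ with $S$ smooth is open, and is nonempty thanks to $(C_0,S_0)$; hence $U$ is dense in $I$, and $\pi_1(U)$ contains a nonempty open subset of $H_\chi$. This yields (4). The main obstacle is the production of the pair $(C_0,S_0)$ with $S_0$ smooth, which rests on (and is essentially the content of) the classical construction of \cite{GP}.
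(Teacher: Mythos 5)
Your treatment of (1)$\Leftrightarrow$(2), (1)$\Leftrightarrow$(3), (4)$\Rightarrow$(5) and (5)$\Rightarrow$(1) matches the paper's proof (restriction of the resolution to a general plane plus Theorem \ref{thm1}; Theorem \ref{EGP}; Corollary \ref{sInt} applied to the general plane section). Your incidence-variety argument for passing from \emph{one} curve with character $\chi$ on a smooth degree-$s$ surface to the \emph{general} curve of $H_\chi$ is a correct and slightly more explicit packaging of a step the paper leaves implicit; the observation that $h^0(\ic_C(s))$ is constant on $H_\chi$ (so $\pi_1$ is a projective bundle and $I$ is irreducible) is exactly what makes it work.

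The gap is in (1)$\Rightarrow$(4), at the point you yourself flag as ``the main obstacle'': you assert that the construction of \cite{GP} already produces a p.n.\ curve $C_0$ with character $\chi$ lying on a \emph{smooth} surface $S_0$ of degree $s$. That is not what Thm.~2.5 of \cite{GP} gives; it produces a p.n.\ curve with the prescribed connected character, with no control on the surface of minimal degree containing it. Establishing the extra property is the actual content of the paper's proof of (1)$\Rightarrow$(4): one must strengthen the Gruson--Peskine induction by carrying along the hypothesis that $C$ lies on a smooth surface $S$ of degree $s$ (together with the condition that $\omega_C(-e)$ has a section with smooth zero locus), and then verify that the double-linkage step $F\cap S = X\cup C$, $F\cap T = X\cup C_i$ can be performed with $T$ a \emph{smooth} surface of degree $s+1$. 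This requires the liaison exact sequence $0 \to \ic_U(s+1) \to \ic_X(s+1) \to \omega_C(-e+i-1)\to 0$ to exhibit a pencil $V=\lag HS, T'\rag \subset H^0(\ic_X(s+1))$ whose base locus is $T'\cap S$, followed by a Bertini argument using the smoothness of $S$ to conclude that the general member of $V$ is smooth. Without this inductive step your proof of (1)$\Rightarrow$(4) reduces to a citation of a statement that the cited source does not prove.
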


\begin{proof}
(1) $\Leftrightarrow$ (2) this follows from Theorem \ref{thm1} since, for $H$ a general plane, the minimal free resolution of $X\cap H$ is the restriction of $(*)$ to $H$ and $\chi (X\cap H)=\chi$.\\
(1) $\Leftrightarrow$ (3): this is  (4) of Theorem \ref{EGP}.\\
(1) $\Rightarrow$ (4): we show that for every connected character $\chi$ of length $s$ there exists a p.n. curve $C \subset \Pt$ with $\chi (C) = \chi$ and lying on a smooth surface of degree $s$. For this we take over the proof of Thm. 2.5 in \cite{GP}. The argument is by induction. Assume $C$ is a p.n. curve with $\chi (C) =(n_0,..., n_{s-1})$ and that $C$ lies on, $S$, a smooth surface of degree $s$. (If $s=1$ this is clearly satisfied). Following \cite{GP} we show the existence of a p.n. curve $C_1$ (resp. $C_2$) with $\chi (C_1)=(n_0 + 2, n_0 + 1, n_1 + 1,...,n_{s-1} + 1)$ (resp. $\chi (C_2)= (n_0 + 1, n_0 + 1, n_1 + 1,...,n_{s-1} + 1)$), lying on a smooth surface of degree $s+1$. Moreover, as in \cite{GP}, the following condition is also part of the induction:
$$\omega _C(-e(C))\,\,\, has\,\, a\,\, section\,\, \sG, with\,\, smooth\,\, zero-locus\,\,(\sG)_0\,\,(+)$$
The curve $C_1$ (resp. $C_2$) is constructed as follows: there exists a smooth surface, $F$, of degree $n_0+2$ (resp. $n_0+1$) containing $C$ (observe that $n_0=e+3$). If $L =\oc _F(C)$, then $C_1$ (resp. $C_2$) is a general section of $L(1)$. This means that $C_i$ is obtained by double linkage from $C$: $U:=F \cap G_a =X \cup C$ and $F \cap T_{a+1} =X \cup C_i$. If we take $G_a =S$, it is enough to show that $X$ lies on a smooth surface of degree $s+1$, $T$. The exact sequence of liaison:
$$0 \to \ic _U(s+1) \to \ic _X(s+1) \stackrel{r}{\to} \omega _C(-e+i-1) \to 0$$
shows that $H^0(\ic _X(s+1))$ contains (at least) $V = \lag H_iS, T' \rag$, where $r(T')=(\sG )_0$ (see $(+)$). Hence $T'$ is not a multiple of $S$. The base locus of $V$ is $B =T' \cap S$. By Bertini's theorem the general surface in $H^0(\ic _X(s+1))$ is smooth out of $B$. Since $S$ is smooth, the general surface $T = HS+T' \in V$ is smooth with $r(T)=(\sG )_0$. For general $T \in V$ the linked curve $C_i$ will be smooth  and will satisfy $(+)$ ($\omega _{C_i}(-e-3+i)$ has a section with zero-locus $(\sG )_0$ cut out by $S$ residually to $X \cap C_i$). We conclude as in \cite{GP}.\\
(4) $\Rightarrow$ (5): clear.\\
(5) $\Rightarrow$ (1): by considering the general plane section $Z =X\cap H$, this follows from Corollary \ref{sInt}.  
\end{proof}

We notice the following handy criteria:

\begin{corollary}
\label{leCor}
Let $X \subset \Pt$ be an a.C.M. curve of degree $d$, lying on an integral surface of degree $s$. If $d > s(s-1)$, then $X$ is smoothable.
\end{corollary}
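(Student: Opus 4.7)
The plan is to reduce to the characterization in Theorem~\ref{thm2}: since that theorem equates smoothability with connectedness of the numerical character, it is enough to show that $\chi(X)$ is connected. Because $\chi(X)$ is by definition the character of a general plane section, I would work with $Z := X \cap H$ for a general plane $H$.

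Dispose first of the trivial case $s = 1$: here $\chi$ has length one and is vacuously connected. So assume $s \geq 2$, and let $S \supset X$ be the given integral surface of degree $s$. By Bertini's theorem on irreducibility (applicable since $S$ is integral of dimension two and we work over an algebraically closed field of characteristic zero), the general plane section $C_H := S \cap H$ is an integral plane curve of degree $s$, and clearly $Z \subset C_H$.

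The crux is to verify that the length of $\chi(Z)$ is exactly $s$, so that Corollary~\ref{sInt} can be applied to the integral curve $C_H$. This is where the numerical hypothesis $d > s(s-1)$ enters. Suppose, for contradiction, that $s(Z) = s' \leq s-1$, so that $Z$ lies on some plane curve $F$ of degree $s'$. Since $C_H$ is integral of degree $s > s'$, it shares no component with $F$, and Bezout gives $|F \cap C_H| = s' s \leq s(s-1)$. As $Z \subset F \cap C_H$, this forces $d = \deg Z \leq s(s-1)$, contradicting the hypothesis. Hence $s(Z) = s$.

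Having $s(Z) = s$, Corollary~\ref{sInt} yields that $\chi(Z) = \chi(X)$ is connected, and Theorem~\ref{thm2} then delivers smoothability of $X$. The only point beyond bookkeeping inside the paper is the appeal to Bertini on an integral (possibly singular) surface; the rest is a direct Bezout argument against the degree hypothesis.
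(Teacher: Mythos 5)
Your proof is correct and follows essentially the same route as the paper: the hypothesis $d>s(s-1)$ is used via Bezout to force $s(X)=s$ (what the paper calls ``by degree reason''), and the conclusion is then drawn from condition (5) of Theorem \ref{thm2}, whose proof is exactly your passage to the general plane section and appeal to Corollary \ref{sInt}. You have merely unpacked the two steps the paper leaves implicit.
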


\begin{proof} By degree reason $s=s(X)$. We conclude with (5) of Theorem \ref{thm2}.
\end{proof}

Finally, to really complete the picture:

\begin{proposition}
\label{EffCara} Every numerical character is effective. More precisely let $\chi$ be a numerical character then $H_{\chi} \neq \emptyset$ and the general curve of $H_{\chi}$ is reduced. 
\end{proposition}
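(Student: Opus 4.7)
The plan is induction on the length $s$ of $\chi = (n_0, \ldots, n_{s-1})$. The base case $s = 1$ is immediate: $\chi = (n_0)$ is realized by a smooth plane curve of degree $n_0$ lying in a hyperplane of $\Pt$, which is a reduced a.C.M.\ curve. For the inductive step, if $\chi$ is connected, Theorem \ref{EGP}\,(3) directly produces a projectively normal (hence smooth and reduced) curve with character $\chi$, giving $H_\chi \neq \emptyset$ with reduced general member.

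Suppose now $\chi$ is not connected, and let $t$ be the smallest index with $n_{t-1} > n_t + 1$. Set $\chi_1 = (n_0, \ldots, n_{t-1})$ and $\chi_2 = (n_t - t, \ldots, n_{s-1} - t)$. By the minimality of $t$, $\chi_1$ is a connected numerical character of length $t$ and $\chi_2$ is a numerical character of length $s - t < s$. Applying the connected case to $\chi_1$ together with Theorem \ref{thm2}\,(4) yields a projectively normal curve $C_1$ with $\chi(C_1) = \chi_1$ lying on a smooth surface $S \subset \Pt$ of degree $t$. By the inductive hypothesis there is a reduced a.C.M.\ curve $C_2$ with $\chi(C_2) = \chi_2$, which I would place in general position with respect to $S$ and $C_1$.

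From the triple $(C_1, C_2, S)$ I want to construct an a.C.M.\ curve $X \subset \Pt$ with $\chi(X) = \chi$. A natural route is a liaison-addition construction lifting to $\Pt$ the residual decomposition of the general plane section furnished by Proposition \ref{propEP}: one looks for $X$ fitting, roughly, into a short exact sequence $0 \to \ic_{C_2}(-t) \to \ic_X \to \ic_{C_1, S} \to 0$, the first map being multiplication by a defining equation of $S$. An alternative, more constructive route is via Hilbert--Burch: choose Betti numbers $(\underline a, \underline b)$ realizing $\chi$ (they exist by Proposition \ref{Exi} applied in $\Ptw$) and lift the corresponding Hilbert--Burch matrix $\fG$ generically from $\Ptw$ to $\Pt$; for a generic lift, the $k \times k$ minors generate a saturated ideal of codimension two and therefore define an a.C.M.\ curve with the prescribed minimal free resolution, hence with plane section of character $\chi$.

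The main obstacle is the verification that the constructed $X$ is truly a.C.M.\ with character exactly $\chi$: in the liaison formulation this amounts to controlling the cohomological glue between $C_1$ and $C_2$; in the Hilbert--Burch formulation it reduces to checking that the maximal minors of a generic lift cut out a codimension-two scheme. For the reducedness statement I would use that reducedness is an open condition in a flat family of locally Cohen--Macaulay curves (which by definition carry no embedded points), together with the fact that the construction above can be arranged to produce a reduced $X$ --- either because the generic Hilbert--Burch minors have reduced vanishing locus, or because $C_1$ and $C_2$ can be taken reduced with disjoint supports; Theorem \ref{EGP}\,(2) then propagates reducedness to the general member of $H_\chi$.
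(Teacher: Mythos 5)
Your skeleton is the right one and matches the paper's: split $\chi$ at the first index $t$ with $n_{t-1}>n_t+1$, realize the connected head $(n_0,\dots ,n_{t-1})$ by a projectively normal curve on a degree-$t$ surface, realize the shifted tail by induction, and glue. But the proposal stops exactly where the work begins: you yourself flag that ``the main obstacle is the verification that the constructed $X$ is truly a.C.M.\ with character exactly $\chi$'', and that verification \emph{is} the proposition. Moreover your first route cannot work as written. For a space curve $X=C_1\cup C_2$ the residual sequence with respect to $S$ is $0\to \ic _{C_2}(-t)\to \ic _X\to \ic _{X\cap S,S}\to 0$, and since $C_2$ is a curve it necessarily meets the surface $S$; if $C_2$ is in general position then $C_2\cap S$ is a nonempty finite set disjoint from $C_1$, so the quotient is $\ic _{C_1\cup (C_2\cap S),S}$, never $\ic _{C_1,S}$ (a local check at a point of $C_2\cap S$ shows no $X$ at all can fit into your proposed sequence). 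The paper sidesteps precisely this obstruction by descending to the general plane section \emph{before} gluing: there the residual scheme $R=\tilde R\cap H$ is a finite set that can be taken disjoint from the integral degree-$t$ curve $T=\tilde T\cap H$, so $Z=X\cup R$ genuinely satisfies $0\to \ic _R(-t)\to \ic _Z\to \ic _{X,T}\to 0$; the equality $\chi (Z)=\chi$ is then extracted from this sequence using $h^0(\ic _{X,T}(n))=0$ for $n<n_{t-1}$ (Remark \ref{mfrCar}) and the bookkeeping of Lemma \ref{niaibj}, as in the proof of Theorem \ref{thm1}, and one finally takes the cone over the reduced scheme $Z$ to get a reduced member of $H_{\chi}$.

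Your alternative Hilbert--Burch route is a genuinely different strategy that could plausibly give $H_{\chi}\neq \emptyset$, but it too is only asserted at the decisive points: Proposition \ref{Exi} takes the Betti numbers as input and does not produce, from an arbitrary $\chi$, a pair $(\underline a,\underline b)$ satisfying its inequalities (that needs Lemma \ref{niaibj} plus an argument); and the explicit scheme built in the proof of Proposition \ref{Exi} is supported at a single point, so reducedness of a ``generic'' determinantal scheme is not free --- the paper's closing remark points out that the reduced-points statement is a theorem of Geramita--Maroscia--Roberts, not an observation. As it stands, both the character computation for the glued object and the reducedness claim are left unproved, so the proposal has a genuine gap at the heart of the argument.
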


\begin{proof} Since the cone over any zero-dimensional subscheme $Z \subset \Ptw$ is an a.C.M. space curve, it is equivalent to show that any character is the character of a zero-dimensional subscheme. We proceed by induction on $s$ and prove that any character is the character of (the general plane section of) a reduced curve. If $\chi$ is connected we are done by Thm. \ref{EGP}. Otherwise let $t$ be the smallest integer such that: $n_{t-1}>n_t+1$. Then $\overline \chi =(n_0, ..., n_{t-1})$ is a character ($n_{t-1}\geq t$), of length $<s$, which is connected. By Thm. \ref{EGP} there exists a p.n. curve $C$ such that $\chi (C) =\overline \chi $. Now let $\chi ' = (m_0, ..., m_{s-1-t})$ where $m_i=n_{t+i}-t$. Then $\chi '$ is a character, of length $<s$. By inductive hypothesis there exists a reduced curve $\tilde R$ with $\chi ( \tilde R)=\chi '$. Observe that $C$ is contained in a unique integral surface of degree $t$, $\tilde T$. We may assume $\dim (\tilde R \cap \tilde T)=0$. Let $Y = C\cup \tilde R$. Taking a general plane section we are reduced to the following situation: $X =C\cap H$ has $\chi (X)=\overline \chi$ and lies on an integral curve of degree $t$: $T=\tilde T \cap H$; $R =\tilde R \cap H$ has $\chi (R)=\chi '$ and $R \cap T=\emptyset$. Both $R$ and $X$ are smooth. We want to prove that $Z =X\cup R$ has $\chi (Z)=\chi$. By construction we have an exact sequence:
$$0 \to \ic _R(-t) \to \ic _Z \to \ic _{X,T} \to 0$$
Since $h^0(\ic _{X,T}(n))=0$ if $n < n_{t-1}$ (cf Remark \ref{mfrCar}), we see that $s(Z)=s$ and, arguing as in the proof of Theorem \ref{thm1}, that $\chi (Z)=(a_0, ..., a_{t-1},n_t=m_0+t, ..., n_{s-1}=m_{s-1-t}+t)$. So far all the generators of $I(R)$ and the relations between them have been taken into account. The next generator of $I(Z)$ has degree $\geq n_{t-1}$ (in fact $=n_{t-1}$ because $h^1(\ic _R(n_{t-1}-t))=0$). It follows that $a_{t-1}>n_t+1$. We conclude with Prop. \ref{propEP}.
\end{proof}

\begin{remark} Proposition \ref{EffCara} is a complement to Thm. \ref{EGP} and determines all possible Hilbert functions of zero-dimensional subschemes of $\Ptw$. It also shows that any possible Hilbert function is realized by a smooth set of points. For points (in $\Pn$) this has been first proved in \cite{GMR}.
\end{remark}


\end{document}